\newcommand{\R}{{\mathbb{R}}}
\renewcommand{\S}{{\mathbb{S}}}
\newcommand{\eps}{\varepsilon}
\newcommand{\E}{{\mathbb{R}}}
\newcommand{\B}{{\mathbb{B}}}
\newcommand{\Vol}{{\rm Vol}}
\renewcommand{\phi}{{{\varphi}}}
\newtheorem{theorem}{Theorem}
\newtheorem{corollary}[theorem]{Corollary}
\theoremstyle{remark}
\newtheorem{remark}{Remark}
\theoremstyle{claim}
\newtheorem{claim}[theorem]{Claim}
\title{Small volume bodies of constant width}
\author{A. Arman}
 \address{Department of Mathematics, University of Manitoba, Winnipeg, MB, R3T 2N2, Canada}
 \email{andrew0arman@gmail.com}
 \thanks{The first author was supported in part by a postdoctoral fellowship of the Pacific Institute for the Mathematical Sciences}
\author{A.\ Bondarenko}
 \address{Department of Mathematical Sciences, Norwegian University of Science and
 	Technology, NO-7491 Trondheim, Norway}
 \email{andriybond@gmail.com}
 \thanks{The second author was supported in part by Grant 334466 of the Research Council of Norway.}
\author{F.\ Nazarov}
 \address{Department of Mathematics, Kent State University, Kent OH 44242, USA}
 \email{nazarov@math.kent.edu}
 \thanks{The third author was supported by NSF Grant DMS-2154335.}
\author{A.\ Prymak}
 \address{Department of Mathematics, University of Manitoba, Winnipeg, MB, R3T 2N2, Canada}
 \email{prymak@gmail.com}
 \thanks{The fourth author was supported by NSERC of Canada Discovery Grant RGPIN-2020-05357.}
\author{D. Radchenko}
 \address{Laboratoire Paul Painlev\'{e}, Universit\'{e} de Lille, F-59655 Villeneuve d'Ascq, France}
 \email{danradchenko@gmail.com}
\thanks{The fifth author was supported by ERC Starting Grant No. 101078782.}
\keywords{Bodies of constant width, volume of intersection of balls}
\subjclass[2010]{Primary 52A20; Secondary 52A40, 28A75, 49Q20}
\begin{document}
\begin{abstract}
For every large enough $n$, we explicitly construct a body of constant width $2$ that has volume less than $0.9^n \text{Vol}(\mathbb{B}^{n}$), where $\B^{n}$ is the unit ball in $\mathbb{R}^{n}$. This answers a question of O.~Schramm. 
\end{abstract}
\maketitle

\section{Introduction}
\label{sec:intro}

A convex body $K$ in the $n$-dimensional Euclidean space $\R^n$ has constant width $w$ if the length of the orthogonal projection of $K$ onto any line is equal to $w$. 

By the isodiametric inequality in $\E^n$ (see, e.g., \cite{G}*{Th.~8.8, p. 152}), the greatest volume body of constant width 2 is the unit ball $\B^{n}$. On the contrast, the problem of finding the least volume body of constant width (Blaschke-Lebesgue problem) is more difficult and remains open for all $n\geq 3$, see for instance~\cite{CG}*{Ch. 7},~\cite{CFG}*{Part A 22},~\cite{MMO}*{p.334-335} for a history of the problem.

Let $\Vol(K)$ denote the volume of a body $K$. A body $K$ is said to have effective radius~$r$ if $\Vol(K)=\Vol(r\B^n)$. Let $r_n$ denote the smallest effective radius of a body of constant width~$2$ in $\E^n$. Evidently, $r_n\leq 1$. Schramm~\cite{Schr} established the first non-trivial lower bound $r_n\ge \sqrt{3+\tfrac{2}{n+1}}-1$ and asked (see also the survey of Kalai~\cite{K}*{Problem~3.4}) if there exists $\eps>0$ such that $r_n\leq 1-\eps$ for all $n\geq 2$.
We answer the question of Schramm in the affirmative by proving that $r_n<0.9$ for all sufficiently large $n$.

\begin{theorem}\label{thm:volumebound}
For every sufficiently large $n$, there is a body $M$ in $\R^n$ of constant width $2$ with $\Vol(M)< 0.9^n \Vol(\B^{n})$.
\end{theorem}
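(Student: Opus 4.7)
The plan is to build $M$ as a ball polytope $M = \bigcap_{s \in S} \overline{B(s,2)}$, possibly followed by a classical completion step, where $S \subset \R^n$ is a carefully chosen finite set of diameter at most $2$. A natural first attempt is to take $S$ to consist of many points on a sphere of radius $r$ slightly less than $\sqrt{2}$ centred at the origin; the value $r \approx \sqrt{2}$ is suggested by the fact that this is essentially the circumradius of a regular $n$-simplex with edge length $2$, and taking $r<\sqrt{2}$ ensures that pairwise distances in $S$ stay below $2$. If the resulting set is not already diametrically complete, one invokes P\'al's completion theorem to produce a body $M$ of constant width $2$ still contained in $\bigcap_{s \in S}\overline{B(s,2)}$; this reduction lets me focus entirely on bounding the volume of the ball polytope.

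The core of the argument is the volume bound $\Vol(M) < 0.9^n \Vol(\B^n)$. For each direction $u \in \mathbb{S}^{n-1}$, the point $t u$ lies in $M$ iff $|tu - s| \leq 2$ for every $s \in S$, and the restriction becomes most stringent for those $s \in S$ pointing roughly opposite to $u$. If $S$ is dense enough on its sphere, then for every $u$ there is some $s \in S$ whose direction is within a small angle of $-u$, and this forces the largest admissible $t$ in direction $u$ to be strictly less than $1$ on average. Integrating radially over $u$ gives $\Vol(M) \leq c^n \Vol(\B^n)$ with a constant $c$ determined by $r$ and the covering density of $S$, and I would tune $r$ (slightly below $\sqrt{2}$) and $|S|$ (exponential in $n$) to drive $c$ below $0.9$. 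The volume estimate itself is carried out by spherical cap calculations: each $s \in S$ cuts a cap out of $\overline{B(0,2)}$, and when the cap centres are spread across an appropriate sphere, the cuts accumulate multiplicatively into an exponential-in-$n$ reduction.

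The main obstacle is obtaining a sharp enough bound on $c$. The naive simplex construction (taking $S$ to be the $n+1$ vertices of a regular simplex of edge $2$) is known to fail to produce diameter $2$ in high dimensions (as exploited in work on the Borsuk problem), and even its ball polytope has effective radius close to $1$. To beat $0.9$, one must add exponentially many extra points to $S$ and quantitatively track how each extra cap shaves off volume from the intersection. The delicate point is to balance two competing effects: adding more points makes $M$ thinner in every direction, but the constraint that $S$ have diameter at most $2$ forces the extra points to cluster on a limited spherical region, which caps how much extra cutting can be done. Finding the right configuration, and carrying out the resulting spherical-cap volume estimate with a constant strictly less than $0.9$, is what I expect to be the crux of the proof.
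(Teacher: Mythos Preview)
What you have written is a research plan, not a proof: you neither specify a set $S$ nor carry out any volume computation, and you yourself flag the crux as unresolved. There is also a concrete error in the heuristic. With $r$ close to $\sqrt{2}$ and $\diam(S)\le 2$, any two points of $S$ on the sphere of radius $r$ must subtend an angle at most about $\pi/2$, so $S$ is confined to a region no larger than (a rotate of) $r(\S^{n-1}\cap\R^n_+)$. Hence your claim ``for every $u$ there is some $s\in S$ whose direction is within a small angle of $-u$'' is false: for $u$ in the same orthant as $S$ there is no such $s$, and in that cone the radial function of the ball polytope is not pushed below $1$ but can be as large as $\sqrt{2}$. The volume saving therefore cannot come from a uniform radial contraction, and a direction-by-direction spherical-cap count of the type you sketch does not obviously yield any constant below $1$, let alone $0.9$.

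The paper resolves exactly this by working orthant by orthant rather than direction by direction. It takes the (continuous) set of centres $L=\sqrt{2}\,(\S^{n-1}\cap\R^n_+)\cup(\sqrt{2}-2)\,(\S^{n-1}\cap\R^n_+)$, shows directly that $M=\bigcap_{x\in L}(x+2\B^n)$ already has constant width $2$ (so no P\'al completion is needed), and proves that $v\in M$ forces $(|v_+|,|v_-|)$ to lie in a fixed two-dimensional region $A$. The volume of $M$ in a $(k,n-k)$ orthant then reduces to $\iint_A a^{k-1}b^{n-k-1}\,da\,db$; bounding $A$ by a triangle $T_{\alpha,\beta}$ and using the Beta-integral identity collapses the sum over orthants to $\sum_k \alpha^k\Omega_k\,\beta^{n-k}\Omega_{n-k}/\Omega_n\le (n+1)(\alpha^2+\beta^2)^{n/2}$, and an explicit choice of $(\alpha,\beta)$ gives $\tfrac12\sqrt{\alpha^2+\beta^2}<0.9$. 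The two ideas you are missing are the explicit positive-orthant configuration and the two-dimensional reduction via $v\mapsto(|v_+|,|v_-|)$; without them there is no mechanism in your outline to produce a quantitative constant.
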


\section{Preliminaries and the construction of $M$}

For every positive integer $n$, let $\S^{n-1}=\{x\in \R^n \;:\; |x|=1\}$ be the unit sphere in $\R^{n}$. Let $\omega_n$ and $\Omega_n=\frac{\omega_{n}}{n}$ be, respectively, the surface area of $\S^{n-1}$ and the volume of $\B^{n}$. Define $\R^{n}_+$ to be the positive orthant, i.e.,
$$\R^{n}_+=\{(x_1,\ldots, x_n)\in \R^{n} \;:\;  x_i\geq  0 \; \text{for all} \; i\in\{1,\ldots, n\}\}.$$
Define $S=\S^{n-1}\cap \R^{n}_{+}$ and let 
\[
L:=\left(\sqrt{2}S\right)\cup\left((\sqrt{2}-2)S\right)\,.
\] 
Now we define
\begin{equation}\label{eq:M}M:=\bigcap_{x\in L}(x+2\B^n).\end{equation}

Figure~\ref{fig:pictures} is an illustration of a body $M$ in dimensions $n=2, 3$. The colors represent different quadrants/orthants. \footnote{We refer a reader to \href{http://prymak.net/const-width-3d.html}{http://prymak.net/const-width-3d.html} for a 3d view of the body $M$ ($n=3$). }
\begin{figure}
    \centering
    \includegraphics[width=0.35\linewidth]{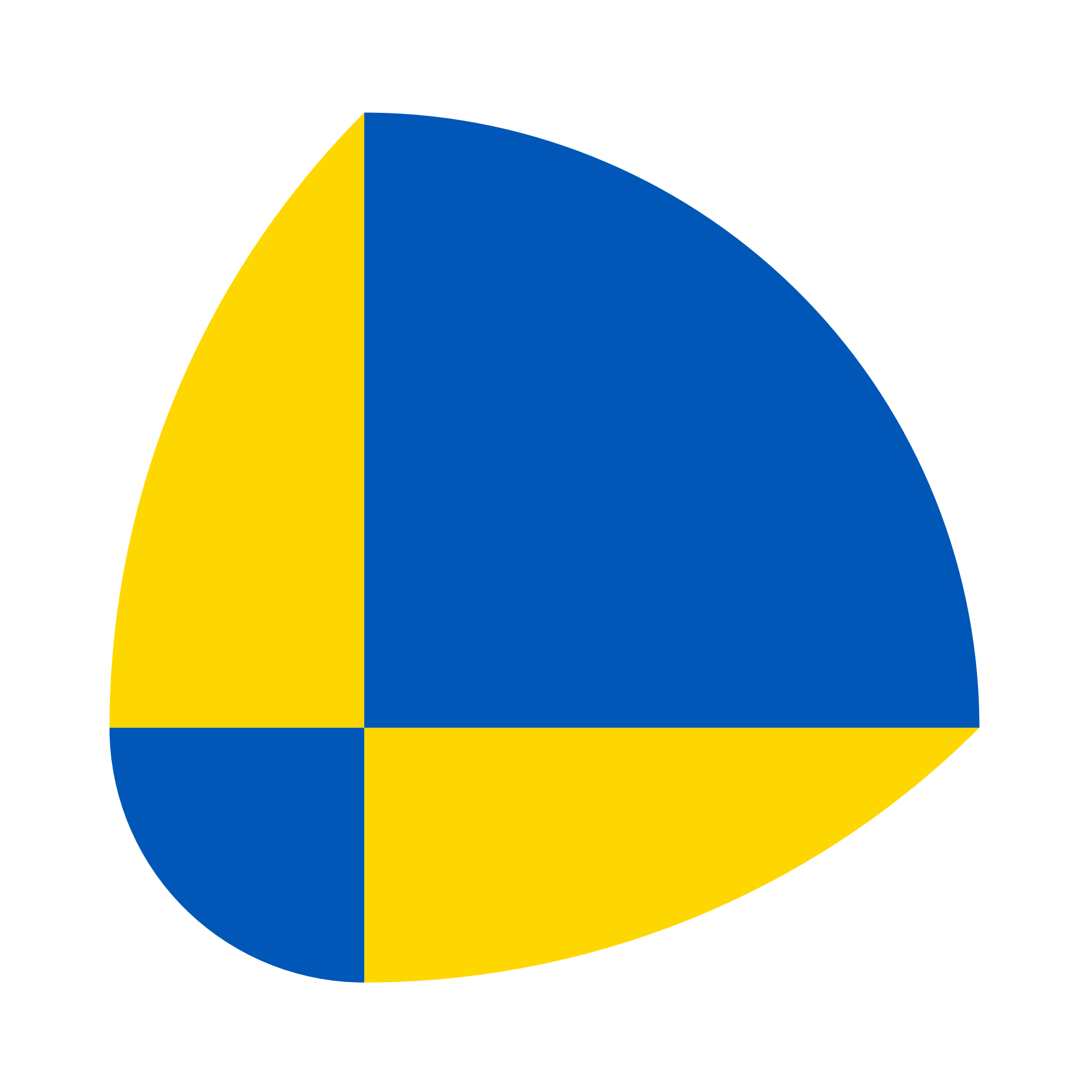}
    \qquad
    \includegraphics[width=0.35\linewidth]{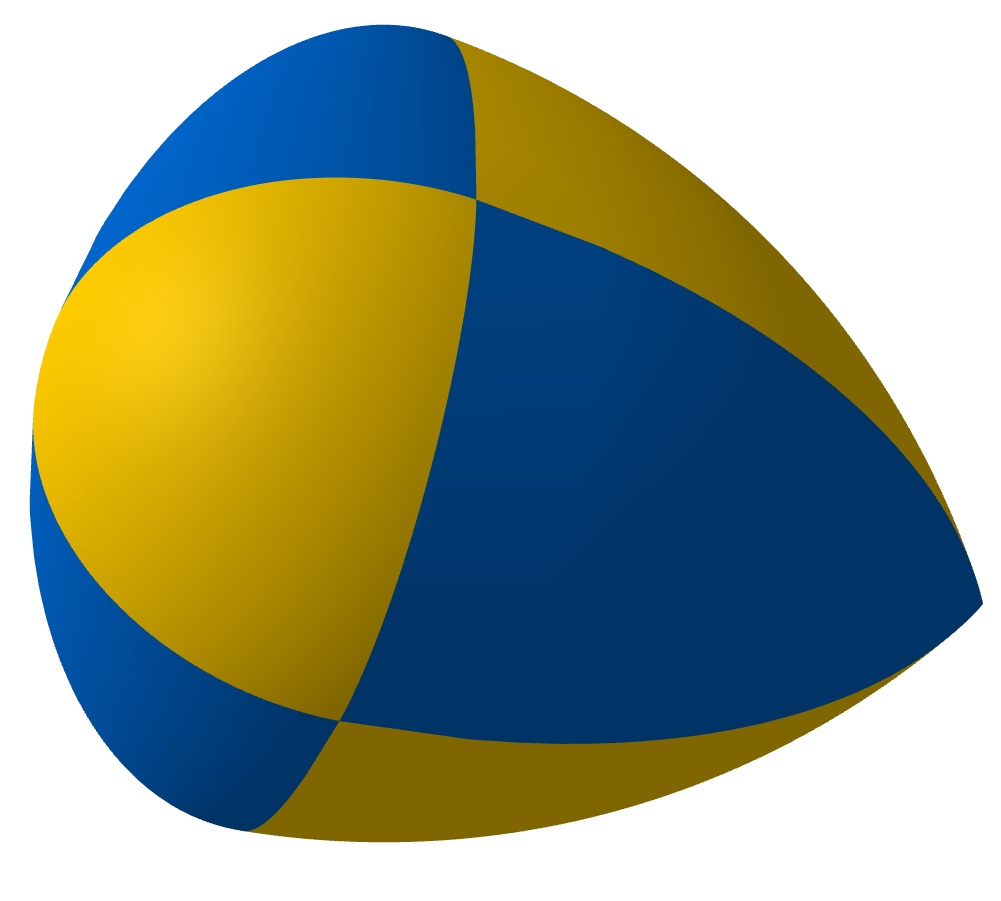}
    \caption{Illustration of the body $M$ in dimensions $n=2,3$.}
    \label{fig:pictures}
\end{figure}

Before we prove that $M$ is a body of constant width $2$ (Claim~\ref{claim:Mconstwidth}), we prove an auxiliary Claim~\ref{claim:MtoA}.
Note that any vector $v\in\R^n$ has a unique representation as $v=v_+-v_-$ with $v_{\pm}\in \R^{n}_+$  and $v_+\cdot v_- = 0$. More precisely, for a vector $v=(v_1,\ldots,v_n)$, the $i$-th coordinates of the vectors $v_{+}$ and $v_{-}$ are $\max\{v_i,0\}$ and $\max\{-v_i,0\}$ respectively. 

Define a set $$A:=\{(a,b)\in \R_+^2 \;\colon\; a^2+(b+\sqrt{2})^2\le 2^2\}.$$ 
\begin{claim}\label{claim:MtoA} If $v\in M$, then $(|v_+|,|v_-|)\in A$.
\end{claim}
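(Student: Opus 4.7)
The plan is to exploit directly the defining inclusion $v \in M$ if and only if $|v-x| \le 2$ for every $x \in L$, by choosing $x$ cleverly to extract the circular inequality $a^2+(b+\sqrt{2})^2 \le 4$ with $a = |v_+|$, $b = |v_-|$. The key structural ingredient is the disjoint coordinate support of $v_+$ and $v_-$ (forced by $v_+\cdot v_- = 0$ together with both being in $\R_+^n$): whenever $v_- \neq 0$, the unit vector $u := v_-/|v_-|$ lies in $S$, so $\sqrt{2}u \in L$, and the disjointness of supports makes $v_+\cdot u = 0$, whence $v\cdot u = -b$ on the nose.

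Plugging this choice into the constraint $|v-\sqrt{2}u|^2 \le 4$ and expanding $|v|^2 - 2\sqrt{2}(v\cdot u) + 2$ produces, after a one-line rearrangement using $|v|^2 = a^2+b^2$, the desired inequality $a^2+(b+\sqrt{2})^2 \le 4$. This is the whole content of the claim in the generic case.

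The only scenario not covered is $v_- = 0$, where the vector $u$ above is undefined; then $b = 0$ and the claim reduces to $a \le \sqrt{2}$. If $v = 0$ this is immediate, and otherwise I would switch to the \emph{other} component of $L$ and set $u' := v/|v| \in S$, so that $(\sqrt{2}-2)u' \in L$ and the constraint $|v-(\sqrt{2}-2)u'| \le 2$ collapses to $a+2-\sqrt{2} \le 2$. I do not anticipate any real obstacle: the disjoint-support decomposition is perfectly tailored so that each $u \in S$ tested against $v$ yields a predictable inner product, and the role of the two pieces of $L$ is clarified along the way — $\sqrt{2}S$ delivers the main inequality, while $(\sqrt{2}-2)S$ is needed precisely to prevent $v$ from sitting too far out in the positive orthant.
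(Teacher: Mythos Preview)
Your proof is correct and follows essentially the same route as the paper: in the generic case $v_-\ne 0$ you test $v$ against $\sqrt{2}\,\widehat{v}_-\in L$ (exploiting the orthogonality $v_+\cdot v_-=0$), and in the case $v_-=0$, $v\ne 0$ you test against $(\sqrt{2}-2)\widehat{v}\in L$. The only cosmetic difference is that you expand $|v-\sqrt{2}u|^2$ via inner products while the paper writes the squared distance directly using the orthogonal decomposition.
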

\begin{proof} 
Define for every nonzero $x\in \R^{n}_+$ the vector $\widehat{x}=\frac{1}{|x|}x\in S$. Claim~\ref{claim:MtoA} holds trivially for $v=0$, so assume that $v\in \R^{n}$ is a nonzero vector.

If $v\in \R^{n}_{+}$, then $v=v_{+}$ and the distance from $v$ to $(\sqrt{2}-2)\widehat{v}\in L$ is $|v|+2-\sqrt{2}$, and so $|v|\leq \sqrt{2}$ and $(|v_{+}|, |v_{-}|)=(|v|, 0)\in A$. Otherwise, $\widehat{v}_{-}$ is well-defined, and the squared distance from $v$ to $\sqrt{2}\widehat{v}_{-}\in L$ is $|v_{+}|^2+(|v_{-}|+\sqrt{2})^2$, so $(|v_{+}|, |v_{-}|)\in A$. \end{proof}

\begin{claim}\label{claim:Mconstwidth} The set $M$, as defined in (\ref{eq:M}), is a body of constant width $2$. Moreover $M=\widetilde{M}$, where $$\widetilde{M}=\{v-w \;:\; v,w\in\R^{n}_{+}, \; (|v|,|w|)\in A\}.$$
\end{claim}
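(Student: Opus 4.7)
I would split the proof into two stages: first showing $M = \widetilde{M}$, then using the concrete description $\widetilde{M}$ to verify constant width.

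The inclusion $M \subseteq \widetilde{M}$ is immediate from Claim~\ref{claim:MtoA}, since any $v \in M$ has the decomposition $v = v_+ - v_-$ with $v_\pm \in \R^n_+$ and $(|v_+|, |v_-|) \in A$. For the reverse inclusion $\widetilde{M} \subseteq M$, I take $u = v - w$ with $v, w \in \R^n_+$ and $(|v|, |w|) \in A$, and verify $|u - x| \le 2$ for every $x \in L$. When $x = \sqrt{2}\,s$ with $s \in S$, I regroup $u - x = v - (w + \sqrt{2}\,s)$; since both vectors lie in $\R^n_+$ their inner product is nonnegative, and
\[
|u - x|^2 \le |v|^2 + |w + \sqrt{2}\,s|^2 \le |v|^2 + (|w| + \sqrt{2})^2 \le 4
\]
by the definition of $A$. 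When $x = (\sqrt{2} - 2)\,s$, the analogous regrouping $u - x = (v + (2-\sqrt{2})\,s) - w$ yields $|u - x|^2 \le (|v| + 2 - \sqrt{2})^2 + |w|^2$, so it suffices to establish the implication $(a, b) \in A \Rightarrow (a + 2 - \sqrt{2})^2 + b^2 \le 4$. Since the left-hand side is convex in $(a, b)$, its maximum on the compact convex set $A$ is attained at an extreme point; a direct check at the three corners $(0, 0), (\sqrt{2}, 0), (0, 2 - \sqrt{2})$ and along the defining arc of $A$ confirms the bound (with equality only at $(\sqrt{2}, 0)$).

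For the constant width property, I first note that $M$ is a convex body: closed and convex as an intersection of balls, bounded since $M \subseteq x + 2\B^n$ for any $x \in L$, and containing a neighbourhood of $0$ since $L$ is compact with $\max_{x \in L} |x| = \sqrt{2} < 2$. Using $M = \widetilde{M}$, I compute $h_M(u) = \sup_{z \in M} u \cdot z$ by writing $z = v - w$ and observing that for fixed $|v| = a$ and $|w| = b$ the quantities $u \cdot v$ and $-u \cdot w$ are maximized over $v, w \in \R^n_+$ by aligning $v$ with $u_+$ and $w$ with $u_-$, giving values $a|u_+|$ and $b|u_-|$ respectively. Hence
\[
h_M(u) = \sup_{(a, b) \in A}\bigl(a|u_+| + b|u_-|\bigr),
\]
reducing everything to a planar linear optimization on the circular segment $A$. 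Parametrizing the defining arc of $A$ by $(a, b) = (2\sin\phi, 2\cos\phi - \sqrt{2})$ with $\phi \in [0, \pi/4]$, and comparing with the two axis segments, a short case analysis yields the closed form
\[
h_M(u) = \begin{cases} \sqrt{2}\,|u_+|, & |u_+| \ge |u_-|, \\ 2 - \sqrt{2}\,|u_-|, & |u_+| \le |u_-|. \end{cases}
\]
Since $u \mapsto -u$ swaps $u_+$ and $u_-$, the two cases pair up so that $h_M(u) + h_M(-u) = 2$ for every direction $u \in \S^{n-1}$, which is the defining property of constant width $2$.

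The main obstacle I foresee is the second subcase in the reverse inclusion $\widetilde{M} \subseteq M$: the inequality $(a + 2 - \sqrt{2})^2 + b^2 \le 4$ on $A$ does not follow directly from the defining inequality of $A$ and really requires the convexity-plus-extreme-points argument (or an equivalent explicit parametric calculation along the arc). Everything else---the first subcase, the support function computation, and the width identity---reduces to Cauchy--Schwarz inside the positive orthant together with routine planar geometry.
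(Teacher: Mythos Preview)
Your proof is correct, but it diverges from the paper's in two places, and the comparison is instructive.

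For the inclusion $\widetilde{M}\subseteq M$, the paper does not check distances to $L$ directly. Instead it first proves that $\widetilde{M}$ itself has diameter at most $2$: given $v_1-w_1,\,v_2-w_2\in\widetilde{M}$, the squared distance is bounded by $(|v_1|+|w_2|)^2+(|v_2|+|w_1|)^2$, which is the squared distance between $(|v_1|,|w_1|)\in A$ and $(-|w_2|,-|v_2|)$ in the reflection $A'$ of $A$ across the line $y=-x$; both sets sit inside a single Reuleaux triangle of width $2$, so the distance is at most $2$. Since $L\subseteq\widetilde{M}$, the diameter bound immediately gives $\widetilde{M}\subseteq\bigcap_{x\in L}(x+2\B^n)=M$. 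This sidesteps your auxiliary inequality $(a+2-\sqrt{2})^2+b^2\le 4$ on $A$ entirely, replacing it with a single picture. Your direct verification is equally valid and perhaps more self-contained, but it requires that extra planar check.

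For constant width, the paper never computes the support function. Having already shown $\diam\widetilde{M}\le 2$, it only needs width $\ge 2$ in every direction, and it gets this by exhibiting for each unit $\theta$ (with, say, $|\theta_-|\ge 1/\sqrt{2}$) the explicit pair $2\theta_+-(2|\theta_-|-\sqrt{2})\widehat{\theta}_-$ and $\sqrt{2}\,\widehat{\theta}_-$ in $\widetilde{M}$ whose difference is $2\theta$. Your route---reducing $h_M(u)$ to the linear program $\sup_{(a,b)\in A}(a|u_+|+b|u_-|)$ and evaluating it in closed form---is more computational but yields strictly more information (the exact support function). One small point: the sentence ``aligning $v$ with $u_+$'' tacitly assumes $u_+\ne 0$; when $u_+=0$ the pointwise maximum of $u\cdot v$ over $\{v\in\R^n_+:|v|=a\}$ can be strictly negative, but your formula for $h_M(u)$ survives because $A$ is downward closed in $\R^2_+$, so one may take $a=0$ at the optimum. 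This is a cosmetic fix, not a real gap.
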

\begin{proof}
First we show that $\widetilde{M}$ has diameter at most $2$. Indeed, let $v_1-w_1, v_2-w_2 \in \widetilde{M}$. Then 
\begin{align*}
|(v_1-w_1)-(v_2-w_2)|^2&=|(v_1+w_2)-(v_2+w_1)|^2\leq |v_1+w_2|^2 +|v_2+w_1|^2\\
& \leq (|v_1|+|w_2|)^2+(|v_2|+|w_1|)^2=:d^2.
\end{align*}

Now, let $A'$ denote the set obtained from $A$ by reflecting it about the line $y=-x$, see Figure~\ref{fig:1}. Then $d$ is the distance between $(|v_1|,|w_1|)\in A$ and $(-|w_2|,-|v_2|)\in A'$. Both $A$ and $A'$ are contained in a Reuleaux triangle $\mathcal{R}$ with vertices $(\sqrt{2},0)$, $(0,-\sqrt{2})$, as depicted in Figure~\ref{fig:1}. So $(|v_1|,|w_1|), (-|w_2|,-|v_2|) \in \mathcal{R}$, and, since $\mathcal{R}$ has diameter $2$, we conclude that $d^2\leq 4$. Hence the diameter of $\widetilde{M}$ is at most $2$. 

By Claim~\ref{claim:MtoA}, $M\subseteq \widetilde{M}$. On the other hand, since $L\subseteq\widetilde{M}$ and $\widetilde{M}$ has diameter at most 2, we have
$$\widetilde{M}\subseteq \bigcap_{x\in \widetilde{M}}(x+2\B^{n})\subseteq \bigcap_{x\in L}(x+2\B^{n})=M.$$
Thus $\widetilde{M}=M$.

\begin{figure}[t]
\centering
			\begin{tikzpicture}[scale=2]
			\draw[->] (-1, 0) -- (2, 0) node[right] {$x$};
			\draw[->] (0, -2) -- (0,1) node[above] {$y$};
			\draw ([shift=(45:2)]0,-1.4142) arc (45:90:2);
                \draw[fill] (0,-1.4142) circle(0.7pt) node[left] {$(0,-\sqrt{2})$};
                \draw[fill] (1.4142,0) circle(0.7pt);
			\draw[fill] (1.4142+0.25,0)            node[above] {$(\sqrt{2},0)$};
			
                \draw ([shift=(180:2)]1.4142, 0) arc (180:225:2);
                \draw (0.7,0.2) node {$A$};
                \draw (-0.2,-0.7) node {$A'$};
                \draw[dotted] ([shift=(45:2)]0,-1.4142) arc (45:105:2);
                \draw[dotted] ([shift=(165:2)]1.4142,0) arc (165:225:2);
                \draw[dotted] ([shift=(-75:2)]-0.5176,0.5176) arc (-75:-15:2);
                \draw (0.7,-0.7) node {$\mathcal{R}$};
			\end{tikzpicture}

  \caption{Sets $A$, $A'$, and a Reuleaux triange $\mathcal{R}$ that contains both $A$ and $A'$.}
  \label{fig:1}
\end{figure}

Now we show that the width of $\widetilde{M}$ in any direction $\theta\in \S^{n-1}$ is at least $2$. Write $\theta=\theta_{+}-\theta_{-}$. Then $|\theta_{+}|^2+|\theta_{-}|^2=1$. Since the width in the direction $\theta$ is the same as in the direction $-\theta$, we can assume without loss of generality that $|\theta_{-}|\geq |\theta_{+}|$, and so $|\theta_{-}|\geq \frac{1}{\sqrt{2}}$. Now, both vectors $2\theta_{+}-(2|\theta_{-}|-\sqrt{2})\widehat{\theta}_{-}$ and $\sqrt{2}\widehat{\theta}_{-}$ are in $\widetilde{M}$, and the difference between the two vectors is $2(\theta_{+}-\theta_{-})=2\theta$. Hence the width of $\widetilde{M}$ in direction $\theta$ is at least 2.

So $\widetilde{M}=M$ is a convex body of constant width 2. \end{proof}

\section{Estimate for $\Vol(M)$}

In order to prove the Theorem, we will prove that there is a positive $\sigma<0.9$ such that
\begin{equation}\label{eq:main}
\Vol(M)\leq (n+1)\sigma^n \Omega_n.
\end{equation}
We will estimate the volume of $M$ in each of the $2^n$ coordinate orthants. We say that an orthant $Q$ is a $(k, n-k)$ orthant if $Q$ consists of the points with exactly $k$ positive and $n-k$ negative coordinates. 

If $Q$ is the $(n,0)$ orthant, we have $\Vol(M\cap Q)=\frac{1}{2^{n}}\left(\sqrt{2}\right)^n\Omega_{n}$, as $M\cap Q=\sqrt{2}\B^{n}\cap\R^{n}_+$. Similarly, if $Q$ is the $(0,n)$ orthant, we have $\Vol(M\cap Q)=\frac{1}{2^{n}}\left(2-\sqrt{2}\right)^n\Omega_{n}$.

Let now $Q$ be a $(k,n)$ orthant with $1\leq k\leq n-1$. Then, by Claim~\ref{claim:MtoA},
$$\Vol(M\cap Q)\leq \Vol(\{v\in Q \; : \; (|v_+|, |v_-|)\in A\}).$$
Let $Q_{+}$ and $Q_{-}$ be the positive orthants in $\R^{k}$ and $\R^{n-k}$ corresponding to the positive and the negative coordinates of $Q$, respectively. If we consider an infinitesimal rectangle $R=[a,a+da]\times[b,b+db]$ with $R\subset A$, the set $\{v \in Q \;:\; (|v_+|, |v_-|)\in R \}$ corresponds to the Cartesian product of the portions of the spherical shells $\{v'\in Q_+ \;:\;|v'|\in[a,a+da]\}\times\{v''\in Q_- \;:\;|v''|\in[b,b+db]\}$, and the corresponding shell volumes in these orthants are $\frac{\omega_k}{2^{k}}a^{k-1}\;da$ and $\frac{\omega_{n-k}}{2^{n-k}}b^{n-k-1}\;db$. Therefore, 
\begin{align*}\Vol(M\cap Q)&\leq \iint\limits_{A} \frac{\omega_k}{2^{k}}\frac{\omega_{n-k}}{2^{n-k}}a^{k-1}b^{n-k-1}\;da\; db=\frac{1}{2^n}k(n-k) \; \Omega_{k}\Omega_{n-k} \iint\limits_{A} a^{k-1}b^{n-k-1}\;da\; db.\end{align*}

Since the number of $(k,n-k)$ orthants is exactly $\binom{n}{k}$, we obtain the estimate
\begin{equation}\label{eq:1}
    \Vol(M)\leq \frac{\Omega_{n}}{2^{n}}\left((\sqrt{2})^n+(2-\sqrt{2})^n+ \sum_{k=1}^{n-1} k (n-k) \binom{n}{k} \frac{\Omega_{k}\Omega_{n-k}}{\Omega_{n}} \iint \limits_{A} a^{k-1}b^{n-k-1}\;da\; db\right).
\end{equation}

Now, let $\alpha, \beta>0$ be some real numbers such that the triangle $T_{\alpha,\beta}:=\{a,b\geq 0 \; : \; \frac{a}{\alpha}+\frac{b}{\beta}\leq1 \}$ contains the set $A$ (see Figure~\ref{fig:triangle}). Note that the condition $A\subseteq T_{\alpha, \beta}$ is equivalent to the statement that the distance from the line $\frac{a}{\alpha}+\frac{b}{\beta}=1$ to $(0,-\sqrt{2})$ is at least $2$, i.e., $\alpha(\beta+\sqrt{2})\geq 2\sqrt{\alpha^2+\beta^{2}}$. 

\begin{figure}[h]
\centering
			\begin{tikzpicture}[scale=2.5]
			\draw[->] (-0.5, 0) -- (2, 0) node[right] {$x$};
			\draw[->] (0, -0.5) -- (0,1.5) node[above] {$y$};
			\draw ([shift=(45:2)]0,-1.4142) arc (45:90:2);
			\draw (1.5,0)--(0,0.99);
                \draw[fill] (1.5,0) circle (0.5pt) node[below] {$(\alpha,0)$};
                \draw[fill] (0,0.99) circle (0.5pt) node[left] {$(0, \beta)$};
                \draw (0.75,0.5) node[above] {$T_{\alpha,\beta}$};
                \draw (0.7,0.2) node {$A$};
			\end{tikzpicture}

  \caption{Triangle $T_{\alpha,\beta}$ that contains $A$.}
  \label{fig:triangle}
\end{figure}

Then for all $k\in \{1, \ldots, n-1\}$, we have 
$$\iint \limits_{A} a^{k-1}b^{n-k-1}\;da\; db\leq \iint \limits_{T_{\alpha,\beta}} a^{k-1}b^{n-k-1}\;da\; db= \alpha^k\beta^{n-k}\iint \limits_{T_{1,1}} a^{k-1}b^{n-k-1}\;da\; db.$$
Now, note that $k(n-k)\iint \limits_{T_{1,1}} a^{k-1}b^{n-k-1}\;da\; db =\binom{n}{k}^{-1}$, see Claim~\ref{claim:Ikl} in the Appendix for the proof, and so 
$$k (n-k) \binom{n}{k} \frac{\Omega_{k}\Omega_{n-k}}{\Omega_{n}} \iint \limits_{A} a^{k-1}b^{n-k-1}\;da\; db \leq \frac{\alpha^{k}\Omega_{k} \cdot \beta^{n-k}\Omega_{n-k}}{\Omega_{n}}.$$

Since $\alpha\geq \sqrt{2}$ and $\beta \geq 2-\sqrt{2}$,~(\ref{eq:1}) implies   
\begin{equation}\label{eq:2}
    \Vol(M)\leq \frac{\Omega_{n}}{2^{n}}\left(\sum_{k=0}^{n} \frac{\alpha^{k}\Omega_{k} \cdot \beta^{n-k}\Omega_{n-k}}{\Omega_{n}}\right).
\end{equation}
Finally, $\alpha \B^{k} \times \beta \B^{n-k}\subseteq \sqrt{\alpha^{2}+\beta^{2}}\B^{n}$, which implies $\frac{\alpha^{k}\Omega_{k} \cdot \beta^{n-k}\Omega_{n-k}}{\Omega_{n}}\leq \left(\sqrt{\alpha^2+\beta^2}\right)^n$, and so for every $(\alpha, \beta)$ with $A\subseteq T_{\alpha,\beta}$, we have 
\begin{equation}\label{eq:3}
    r_{n}\leq \frac{1}{2}(n+1)^{1/n}\sqrt{\alpha^2+\beta^2}.
\end{equation}

Let $$s=\min\{ \sqrt{\alpha^2+\beta^2}\;:\; \alpha,\beta > 0, \; \alpha(\beta+\sqrt{2})\geq 2\sqrt{\alpha^2+\beta^2}\}.$$ Then 
\begin{equation}\label{eq:4}
    r_n\leq \frac{1}{2}(n+1)^{1/n}s.
\end{equation}

A human verifiable proof of $s< 1.8$ is obtained by choosing $\alpha=1.5$ and $\beta=0.7 \sqrt{2}$. For such values we have $\alpha^{2}+\beta^{2}=3.23<3.24=1.8^2$. On other hand, $\alpha(\beta+\sqrt{2})>1.5\cdot 1.7 \cdot \sqrt{2}>3.6\geq 2\sqrt{\alpha^2+\beta^2}$. The middle inequality is equivalent to $1.7\sqrt{2}>2.4$, i.e., $2.89\cdot 2>5.76$.

So $s< 1.8$ and inequality~(\ref{eq:4}) finishes the proof of the Theorem.

\begin{remark}
We will verify in the Appendix that $\frac12 s$ is the least positive root of the equation $8x^6 - 76x^4 + 54x^2 + 1=0$, whose numerical value is $0.89071\ldots$, see Claim~\ref{claim:polynom}.
\end{remark}

\begin{corollary}
There exists $\varepsilon>0$ such that $r_n\leq 1-\varepsilon$ for all $n\geq 2$.
\end{corollary}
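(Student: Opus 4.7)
The plan is to bootstrap Theorem~\ref{thm:volumebound} using the strict form of the isodiametric inequality. Theorem~\ref{thm:volumebound} supplies an integer $n_{0}$ such that $r_{n}<0.9$ for every $n\ge n_{0}$, so what remains is to verify $r_{n}<1$ strictly for each of the finitely many remaining dimensions $n\in\{2,3,\ldots,n_{0}-1\}$; a uniform constant then follows by taking a minimum over a finite set.

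For each fixed $n\ge 2$, I would exhibit a convex body of constant width $2$ that is not a Euclidean ball and then invoke the fact that the ball is the unique volume maximizer among convex bodies of a given diameter (the equality case of the isodiametric inequality, see e.g.\ \cite{G}*{Th.~8.8, p.~152}). This immediately yields $\Vol(K)<\Omega_{n}$ and hence $r_{n}<1$. A uniform choice of $K$ that works in every dimension is the body $M$ from Section~2 itself: Claim~\ref{claim:Mconstwidth} already certifies that $M$ has constant width $2$ for every $n\ge 2$, and $M$ is manifestly not a ball. Indeed, the point $\sqrt{2}\,e_{i}$ lies in $M$ for each $i$; for $n\ge 3$ the three points $\sqrt{2}\,e_{1},\sqrt{2}\,e_{2},\sqrt{2}\,e_{3}$ are pairwise at distance $2$, so they cannot simultaneously lie on a ball of radius $1$ (each such pair would need to be diametrically opposite, which is impossible for three points), while for $n=2$ the unique ball of radius $1$ containing both $\sqrt{2}\,e_{1}$ and $\sqrt{2}\,e_{2}$ is centered at $\tfrac{1}{\sqrt{2}}(1,1)$, and a direct computation shows this ball does not contain the point $(\sqrt{2}-2)\,e_{1}\in L\subseteq M$.

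Combining the two regimes, set
\[
\eps:=\min\bigl\{\,0.1,\;1-r_{2},\;1-r_{3},\;\ldots,\;1-r_{n_{0}-1}\,\bigr\}>0,
\]
so that $r_{n}\le 1-\eps$ for every $n\ge 2$. The only genuine obstacle is the asymptotic bound for large $n$, which is precisely what Theorem~\ref{thm:volumebound} provides; the remaining finite list of dimensions is handled uniformly by the non-ballness of $M$ together with the strict isodiametric inequality, making the corollary essentially a bookkeeping step.
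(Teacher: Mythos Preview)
Your proof is correct and follows essentially the same approach as the paper: use Theorem~\ref{thm:volumebound} for large $n$, invoke the strict isodiametric inequality with the body $M$ for the finitely many small $n$, and take a minimum. The only difference is that you give an explicit argument for why $M$ is not a ball, whereas the paper simply asserts this; your verification is correct and is a welcome addition.
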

\begin{proof}
By Theorem~\ref{thm:volumebound}, there is $n_0$ such that for $n>n_0$ we have $r_n< 0.9$. For every $n\in \{2,\ldots, n_0\}$, the body $M_n$ that we constructed is a body of constant width $2$ in $\R^{n}$ different from $\B^{n}$. So by the equality part of the isodiametric inequality (see, e.g., \cite{G}*{Th.~8.8, p. 152}),  $\varepsilon_n:=1-\left(\frac{\Vol(M_n)}{\Vol(\B^{n})}\right)^{1/n}>0$. Therefore we can take $\varepsilon=\min\{0.1, \varepsilon_2, \ldots,\varepsilon_{n_0}\}>0$.
\end{proof}

\section*{Appendix}

\begin{claim}\label{claim:Ikl}
Let $T_{1,1}=\{(x,y)\in \R^{2}_+ \;:\; x+y\leq 1\}$. Then for all integers $n$ and $k\in\{1,\ldots, n-1\}$, we have $k(n-k)\iint \limits_{T_{1,1}} a^{k-1}b^{n-k-1}\;da\; db =\binom{n}{k}^{-1}$.
\end{claim}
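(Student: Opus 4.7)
The plan is to evaluate the double integral as an iterated integral and recognize the result as a Beta function. First I would integrate with respect to $b$ on the slice $0\le b\le 1-a$, which yields
\[
\iint_{T_{1,1}} a^{k-1}b^{n-k-1}\,da\,db = \int_0^1 a^{k-1}\cdot\frac{(1-a)^{n-k}}{n-k}\,da = \frac{1}{n-k}\,B(k,n-k+1),
\]
where $B$ is the Euler Beta function. Using $B(p,q)=\frac{\Gamma(p)\Gamma(q)}{\Gamma(p+q)}$ with integer arguments, this becomes $\frac{(k-1)!\,(n-k)!}{(n-k)\,n!}=\frac{(k-1)!\,(n-k-1)!}{n!}$.

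Multiplying by $k(n-k)$ gives $\frac{k!\,(n-k)!}{n!}=\binom{n}{k}^{-1}$, as desired. The only minor obstacle is the remark that $n-k\ge 1$ (guaranteed by the hypothesis $k\le n-1$), so dividing by $n-k$ in the first step is legitimate; similarly $k\ge 1$ keeps the factorials honest. No delicate analysis is required, and the computation is entirely elementary once one recognizes the Dirichlet/Beta form.
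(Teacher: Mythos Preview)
Your proof is correct. Both you and the paper begin with the same first step, integrating in $b$ to reduce the double integral to $\frac{1}{n-k}\int_0^1 a^{k-1}(1-a)^{n-k}\,da$. You then evaluate this integral by quoting the Beta--Gamma identity $B(k,n-k+1)=\frac{(k-1)!\,(n-k)!}{n!}$, whereas the paper gives a self-contained derivation of the same value via a generating-function trick: summing $\binom{n-1}{k-1}x^{k-1}$ times the integral over $k$ collapses to $\int_0^1(1-a+ax)^{n-1}\,da=\frac{1}{n}\sum_{k=1}^n x^{k-1}$, and comparing coefficients recovers the integral. Your route is shorter and more standard; the paper's route avoids any appeal to special-function identities at the cost of a small extra computation. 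Either way the arithmetic closes up to $\binom{n}{k}^{-1}$ exactly as you wrote.
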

\begin{proof}
First,
$$\iint \limits_{T_{1,1}} a^{k-1}b^{n-k-1}\;da\; db=\int_{0}^{1}\int_{0}^{1-a}a^{k-1}b^{n-k-1}\; db\;da=\frac{1}{n-k}\int_{0}^{1}a^{k-1}(1-a)^{n-k}\; da.$$
Now, for a variable $x$ consider  
\begin{align*}P(x)&=\sum_{k=1}^{n}\binom{n-1}{k-1}\left(\int_{0}^{1}a^{k-1}(1-a)^{n-k}\; da\right)x^{k-1}=\int_{0}^{1}(1-a+ax)^{n-1}\; da\\&=\frac{x^{n}-1}{(x-1)n}=\sum_{k=1}^{n}\frac{1}{n}x^{k-1}.
\end{align*}
Comparing the coefficients at $x^{k-1}$ in $P(x)$, we get $\int_{0}^{1}a^{k-1}(1-a)^{n-k}\; da =\frac{1}{n \binom{n-1}{k-1}}$, and so
\[k(n-k)\iint \limits_{T_{1,1}} a^{k-1}b^{n-k-1}\;da\; db=\frac{k}{n \binom{n-1}{k-1}}=\binom{n}{k}^{-1}. \qedhere\]
\end{proof}

\begin{claim}\label{claim:polynom} Let $s=\min\{ \sqrt{\alpha^2+\beta^2}\;:\; \alpha, \beta > 0, \; \alpha(\beta+\sqrt{2})\geq 2\sqrt{\alpha^2+\beta^2}\}$. Then $x=\frac{1}{2}s$ is the least positive root of $P(x):=8x^6 - 76x^4 + 54x^2 + 1=0$, which is also the only root of $P(x)$ on $(0,1)$. 
\end{claim}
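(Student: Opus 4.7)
The plan is to identify the minimizing $(\alpha,\beta)$ via Lagrange multipliers, reduce the problem to a single rationalizable cubic, eliminate the auxiliary parameter to obtain a polynomial equation in $s^2$, and finally verify the root-count claim on $(0,1)$.

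First I would argue that the minimum is attained with the constraint $\alpha(\beta+\sqrt{2}) \ge 2\sqrt{\alpha^2+\beta^2}$ active: the feasible set is closed in the positive quadrant and bounded away from the coordinate axes (taking $\beta\to 0$ forces $\alpha\sqrt{2}\ge 2\alpha$, impossible), so a minimizer exists, and any strict inequality in the interior would allow shrinking $\alpha$. Lagrange multipliers applied to the minimization of $\alpha^2+\beta^2$ subject to $g(\alpha,\beta):=\alpha^2(\beta+\sqrt{2})^2 - 4(\alpha^2+\beta^2)=0$ yield, after eliminating the multiplier, the clean relation $\alpha^2 = \beta(\beta+\sqrt{2})$. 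Substituting this back into $g=0$ and writing $t := \beta+\sqrt{2}$ reduces to the single cubic $t^3 - 8t + 4\sqrt{2} = 0$, while $s^2 = \alpha^2+\beta^2 = 2t^2 - 3\sqrt{2}\,t + 2$.

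The key technical step is the elimination. Setting $u := t/\sqrt{2}$ rationalizes to $u^3 - 4u + 2 = 0$ and $s^2 = 4u^2 - 6u + 2$. Computing $u\cdot u^2$ from the latter and equating to $4u-2$ produces $6u^2 + (s^2-18)u + 8 = 0$; combining this with $s^2 = 4u^2-6u+2$ eliminates $u^2$ and yields the linear formula $u = (3s^2+10)/[2(9-s^2)]$. Plugging this back into $u^3 = 4u-2$, clearing denominators, and letting $y := s^2$ gives
\[
(3y+10)^3 = 16(4y+1)(9-y)^2,
\]
which expands to $y^3 - 38y^2 + 108y + 8 = 0$. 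Substituting $y = 4x^2$ and dividing by $8$ produces exactly $P(x) = 8x^6 - 76x^4 + 54x^2 + 1 = 0$. This elimination is the main obstacle: each step is conceptually routine, but a single sign slip destroys the clean match to $P$ (up to the nonzero factor $37$).

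Finally, the already-proved bound $s < 1.8$ (via $\alpha=1.5$, $\beta=0.7\sqrt{2}$) gives $x_* := s/2 < 0.9$, so $x_* \in (0,1)$. Writing $P(x) = \tfrac18 h(4x^2)$ with $h(y) := y^3 - 38y^2 + 108y + 8$, a direct sign check at $y = -1, 0, 3, 4, 34, 35$ locates the three real roots of $h$ in the intervals $(-1,0)$, $(3,4)$, and $(34,35)$. Hence $P$ has exactly two positive real roots, lying in $(\sqrt{3}/2, 1)$ and $(\sqrt{34}/2, \sqrt{35}/2)$. Since $x_* < 1$, it must be the smaller of these, which is therefore both the unique root of $P$ in $(0,1)$ and the least positive root.
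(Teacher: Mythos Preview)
Your argument is correct. Both proofs boil down to locating the critical point of the constrained optimization and then eliminating the auxiliary variable, but the mechanics differ. The paper fixes the level set $\alpha^2+\beta^2=4x^2$, maximizes $f(\beta)=\sqrt{4x^2-\beta^2}(\beta+\sqrt2)$ by one-variable calculus to get the quadratic relation $2\beta^2+\sqrt2\beta=4x^2$, and then repeatedly uses that relation to lower powers of $\beta$ until a single squaring yields $P(x)$; the root count is handled by Descartes' rule of signs together with $P(0)>0$, $P(1)<0$, $P(+\infty)=+\infty$. You instead apply Lagrange multipliers to obtain the clean relation $\alpha^2=\beta(\beta+\sqrt2)$ (which is in fact equivalent to the paper's quadratic), pass to the rationalized cubic $u^3-4u+2=0$ with $s^2=4u^2-6u+2$, and eliminate $u$ by a resultant-style step, picking up the factor $37$ before landing on $y^3-38y^2+108y+8=0$; your root count comes from sign checks on this cubic at integer values. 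Your route is a bit more structural and makes the elimination transparent, while the paper's is a direct hands-on reduction; the only point worth tightening in your write-up is the existence/regularity justification for the Lagrange step (nonemptiness, closedness of the feasible set, and $\nabla g\neq0$ on $g=0$), all of which hold but are only sketched.
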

\begin{proof}
It follows from the definition of $s$ that $x$ is the least positive number for which there exist $\alpha,\beta>0$ such that $\alpha^2+\beta^{2}=4x^2$ and $\alpha(\beta+\sqrt{2})\geq 4x$, or equivalently for which
\begin{equation}\label{eq:max}
\max \{\alpha(\beta+\sqrt{2}) \; :\; \alpha, \beta > 0, \; \sqrt{\alpha^2+\beta^2}=4x^2\}\geq 4x.\
\end{equation}

Now, the function $f(\beta)=\sqrt{4x^2-\beta^2}(\beta+\sqrt{2})$ satisfies $f^\prime(0)=2x>0$, $f(2x)=0$, and is positive on $[0,2x)$. So the maximum of $f$ is attained at a critical point on $(0,2x)$. The critical point satisfies the quadratic equation $2\beta^2+\sqrt{2}\beta-4x^2=0$, whose only positive root is  
$$b=-\frac{1}{2\sqrt{2}}+\sqrt{\frac{1}{8}+2x^2}.$$
The condition~(\ref{eq:max}) is now equivalent to the inequality $f(b)\geq 4x$, or $f^{2}(b)\geq 16x^2$, or  
$$(4x^2-b^2)(b^2+2\sqrt{2}b+2)\geq 16x^2.$$
Since $b^2=2x^2-\frac{1}{\sqrt{2}}b$, we can rewrite this as $$(2x^2+\frac{1}{\sqrt{2}}b)(2x^2+\frac{3}{\sqrt{2}}b+2)\geq 16x^2,$$
or, equivalently,
$$4x^4+4x^2+\left(4\sqrt{2}x^2+\sqrt{2}\right)b+\frac{3}{2}b^2\geq 16x^2.$$
Replacing $b^2$ with $2x^2-\frac{1}{\sqrt{2}}b$ once again, we get 
\begin{align*}
4x^4+\left(4\sqrt{2}x^2+\frac{1}{2\sqrt{2}}\right)b&\geq 9x^2,\\
 4x^4+\left(4\sqrt{2}x^2+\frac{1}{2\sqrt{2}}\right)\left(\frac{-1}{2\sqrt{2}}+\sqrt{\frac{1}{8}+2x^2}\right)&\geq 9x^2,\\
 4x^4-2x^2-\frac{1}{8}+\sqrt{\frac{1}{8}+2x^2}\left(4\sqrt{2}x^2+\frac{1}{2\sqrt{2}}\right)&\geq 9x^2.
\end{align*}
The last inequality can be rewritten as 
$$\sqrt{\frac{1}{8}+2x^2}\left(4\sqrt{2}x^2+\frac{1}{2\sqrt{2}}\right)\geq \frac{1}{8}+11x^2-4x^4.$$
Note that the right hand side is positive for $x\in(0,1)$ and the argument after~(\ref{eq:4}) implies $x<0.9$. So the last inequality can be squared, and we are looking for the least positive $x$ such that
$$\left(\frac{1}{8}+2x^2\right)\left(4\sqrt{2}x^2+\frac{1}{2\sqrt{2}}\right)^2\geq \left(\frac{1}{8}+11x^2-4x^4\right)^2, \text{i.e.},$$
$$\left(\frac{1}{8}+2x^2\right)\left(32x^4+4x^2+\frac{1}{8}\right)\geq 16x^8+121x^4+\frac{1}{64}-88x^6-x^4+\frac{11}{4}x^2, \quad \text{or}$$
$$64x^6+12x^4+\frac{3}{4}x^2+\frac{1}{64}\geq 16x^8-88x^6+120x^4+\frac{11}{4}x^2+\frac{1}{64},$$
which after canceling $\frac{1}{64}$, moving all terms to one side and dividing by $2x^2$ turns into $$P(x)=8x^6-76x^4+54x^2+1\leq 0.$$ 

Note that $P(0)=1>0$, $P(1)=-13<0$ and $P(+\infty)=+\infty$. So $P$ has at least one root on $(0,1)$ and at least one root on $(1,+\infty)$. Combining this with Descartes' rule of signs, we see that $P(x)$ has exactly two roots on $(0,\infty)$. Thus the least positive $x$ with $P(x)\leq 0$ is the least positive root of $P$, which is also the only root of $P$ on $(0,1)$. 
\end{proof}

	\begin{bibsection}
		\begin{biblist}




\bib{CG}{article}{
   author={Chakerian, G. D.},
   author={Groemer, H.},
   title={Convex bodies of constant width},
   conference={
      title={Convexity and its applications},
   },
   book={
      publisher={Birkh\"auser, Basel},
   },
   isbn={3-7643-1384-6},
   date={1983},
   pages={49--96},
   review={\MR{0731106}},
}

\bib{CFG}{book}{
   author={Croft, Hallard T.},
   author={Falconer, Kenneth J.},
   author={Guy, Richard K.},
   title={Unsolved problems in geometry},
   series={Problem Books in Mathematics},
   note={Corrected reprint of the 1991;
   Unsolved Problems in Intuitive Mathematics, II},
   publisher={Springer-Verlag, New York},
   date={1994},
   pages={xvi+198},
}

\bib{G}{book}{
   author={Gruber, Peter M.},
   title={Convex and discrete geometry},
   series={Grundlehren der mathematischen Wissenschaften [Fundamental
   Principles of Mathematical Sciences]},
   volume={336},
   publisher={Springer, Berlin},
   date={2007},
   pages={xiv+578},
   isbn={978-3-540-71132-2},
}

\bib{K}{article}{
	author={Kalai, Gil},
	title={Some old and new problems in combinatorial geometry I: around Borsuk's problem},
	conference={
		title={Surveys in combinatorics 2015},
	},
	book={
		series={London Math. Soc. Lecture Note Ser.},
		volume={424},
		publisher={Cambridge Univ. Press, Cambridge},
	},
	date={2015},
	pages={147--174},
}

\bib{MMO}{book}{
   author={Martini, Horst},
   author={Montejano, Luis},
   author={Oliveros, D\'eborah},
   title={Bodies of constant width},
   note={An introduction to convex geometry with applications},
   publisher={Birkh\"auser/Springer, Cham},
   date={2019},
   pages={xi+486},
}


\bib{Schr}{article}{
   author={Schramm, Oded},
   title={On the volume of sets having constant width},
   journal={Israel J. Math.},
   volume={63},
   date={1988},
   number={2},
   pages={178--182},
}

		\end{biblist}
	\end{bibsection}
\end{document}